\newtheorem{theorem}{Theorem}[section]
\newtheorem{thm}[theorem]{Theorem}
\newtheorem{prop}[theorem]{Proposition}
\newtheorem{lem}[theorem]{Lemma}
\newtheorem{cor}[theorem]{Corollary}
\newtheorem{ex}[theorem]{Example}
\makeatletter \@addtoreset{equation}{section}
\newcommand{\qbinom}[2]{\genfrac{[}{]}{0pt}{}{#1}{#2}}
\DeclareMathOperator*{\CT}{CT}
\begin{document}

\title[Symmetric function generalization of the $q$-Dyson theorem]{A symmetric function generalization of the Zeilberger--Bressoud $q$-Dyson theorem}

\author{Yue Zhou}

\address{School of Mathematics and Statistics, Central South University,
Changsha 410075, P.R. China}

\email{zhouyue@csu.edu.cn}

\subjclass[2010]{05A30, 33D70, 05E05}

\date{August 26, 2020}

\begin{abstract}
In 2000, Kadell gave an orthogonality conjecture for a symmetric function generalization of the Zeilberger--Bressoud $q$-Dyson theorem or the $q$-Dyson constant term identity.
This conjecture was proved by K\'{a}rolyi, Lascoux and Warnaar in 2015.
In this paper, by slightly changing the variables of Kadell's conjecture, we obtain another symmetric function generalization of the $q$-Dyson constant term identity. This new generalized constant term admits a simple product-form expression.

\noindent
\textbf{Keywords:}
Zeilberger--Bressoud $q$-Dyson theorem, Kadell's orthogonality conjecture, symmetric function, constant term identity
\end{abstract}

\maketitle

\section{Introduction}\label{sec-intr}

In 1975, Andrews \cite{andrews1975} formulated a $q$-analogue conjecture for the Dyson constant term identity \cite{dyson}. For nonnegative integers $a_1,a_2,\dots,a_n$,
\begin{equation}\label{q-Dyson}
\CT_x \prod_{1\leq i<j\leq n}
(x_i/x_j;q)_{a_i}(qx_j/x_i;q)_{a_j}=
\frac{(q;q)_{a_1+\cdots+a_n}}{(q;q)_{a_1}(q;q)_{a_2}\cdots(q;q)_{a_n}},
\end{equation}
where $\displaystyle\CT_{x}$ denotes taking the constant term with respect to $x:=(x_1,\dots,x_n)$, and
$(z;q)_k:=(1-z)(1-zq)\dots(1-zq^{k-1})$ is a
$q$-shifted factorial for $k$ a positive integer and $(z;q)_0:=1$.
When $q\rightarrow 1$ the identity~\eqref{q-Dyson} reduces to the Dyson constant term identity.

In 1985, Zeilberger and Bressoud \cite{zeil-bres1985} gave the first proof of
Andrews' $q$-Dyson conjecture using tournaments.
Twenty years later Gessel and Xin \cite{gess-xin2006} gave a second proof using formal Laurent series, and
then, in 2014, K\'{a}rolyi and Nagy \cite{KN} discovered a very short and elegant proof using multivariable Lagrange interpolation.
Finally, Cai~\cite{cai} found an inductive proof by adding additional parameters to the problem. These days, Andrews' ex-conjecture is usually referred as the Zeilberger--Bressoud $q$-Dyson theorem or the $q$-Dyson constant term identity.

For $r$ a nonnegative integer, the $r$th complete symmetric function $h_r(x)$
may be defined in terms of its generating function as
\begin{equation}\label{e-gfcomplete}
\sum_{r\geq 0} z^r h_r(x)=\prod_{i=1}^n
\frac{1}{1-zx_i}.
\end{equation}
More generally, for the complete symmetric function indexed by a partition
$\lambda=(\lambda_1,\lambda_2,\dots)$
\[
h_{\lambda}:=h_{\lambda_1} h_{\lambda_2}\cdots.
\]
For $a:=(a_1,a_2,\dots,a_n)$ a sequence of nonnegative integers, let $x^{(a)}$ denote the alphabet
\[
x^{(a)}:=(x_1,x_1q,\dots,x_1q^{a_1-1},\dots,
x_n,x_nq,\dots,x_nq^{a_n-1})
\]
of cardinality $|a|:=a_1+\cdots+a_n$.
In \cite{kadell}, Kadell defined a generalized $q$-Dyson constant term
\begin{equation}\label{GDyson1}
\widetilde{D}_{v,\lambda}(a):=\CT_x
x^{-v}h_{\lambda}\big(x^{(a)}\big)
\prod_{1\leq i<j\leq n}
(x_i/x_j;q)_{a_i}(qx_j/x_i;q)_{a_j}.
\end{equation}
Here $v=(v_1,v_2,\dots,v_n)\in\mathbb{Z}^{n}$,
$x^v$ denotes the monomial $x_1^{v_1}\cdots x_n^{v_n}$,
and $\lambda$ is a partition such that $|v|=|\lambda|$.
He \cite[Conjecture 4]{kadell} conjectured that
for $r$ a positive integer and $v$ a weak composition such that $|v|=r$,
\begin{equation}\label{kadellconj}
\widetilde{D}_{v,(r)}(a)=
\begin{cases}
\displaystyle
\frac{q^{\sum_{i=k+1}^n a_i}(1-q^{a_k})(q^{|a|+1};q)_{r-1}}{(q^{|a|-a_k+1};q)_r}
\prod_{i=1}^n\qbinom{a_i+\cdots+a_n}{a_i}
& \text{if $v=(0^{k-1},r,0^{n-k})$}, \\[6mm]
0 & \text{otherwise},
\end{cases}
\end{equation}
where $\qbinom{n}{k}=(q^{n-k+1};q)_k/(q;q)_k$ is a $q$-binomial coefficient.
In fact Kadell only considered $v=(r,0^{n-1})$ in his conjecture, but the
more general statement given above is what was proved by K\'{a}rolyi,
Lascoux and Warnaar in \cite[Theorem 1.3]{KLW} using multivariable Lagrange
interpolation and key polynomials.
For a sequence $u=(u_1,\dots,u_n)$ of integers, we denote
by $u^{+}$ the sequence obtained from $u$ by ordering the $u_i$
in weakly decreasing order (so that $u^{+}$ is a partition
if $u$ is a composition). K\'{a}rolyi et al.\ also
proved a closed-form expression for
$\widetilde{D}_{v,v^{+}}(a)$ in the case when $v$ is a weak composition all of whose parts
are distinct, i.e., $v_i\neq v_j$ for all $1\leq i<j\leq n$.
Subsequently, Cai~\cite{cai} gave an inductive proof of Kadell's conjecture.
Recently, we got a recursion for $\widetilde{D}_{v,v^+}(a)$ if
$v$ is a weak composition with unique largest part~\cite{Zhou}.
Later, we obtained a recursion for $\widetilde{D}_{v,v^+}(a)$ for arbitrary nonzero weak composition $v$ in~\cite{Zhou2}.
If $v$ is a weak composition with equal nonzero parts, for example $v=(2,2,0^{n-2})$, it appears that the expression for $\widetilde{D}_{v,v^+}(a)$ is no longer a product. Hence, by slightly changing the variables in
$\widetilde{D}_{v,\lambda}(a)$, we give a new constant term $D_{v,\lambda}(a)$ (see \eqref{GDyson} below), which admits a simple product-form formula for the $v=\lambda$ case.

For $i\in \{1,\dots,n\}$ and $a:=(a_1,a_2,\dots,a_n)$ a sequence of nonnegative integers, let $x^{(a)}_i$ denote the alphabet
\begin{multline}\label{alphabet-x}
x^{(a)}_i:=(x_1,x_1q,\dots,x_1q^{a_1-1},\dots,x_{i-1},x_{i-1}q,\dots,x_{i-1}q^{a_{i-1}-1},\\
x_iq^{-1},x_i,\dots,x_iq^{a_i-1},x_{i+1},x_{i+1}q,\dots,x_{i+1}q^{a_{i+1}-1},\dots,
x_n,x_nq,\dots,x_nq^{a_n-1}),
\end{multline}
of cardinality $|a|+1$. Note that $x^{(a)}_i$ is the alphabet obtained by adding the variable $x_iq^{-1}$
to $x^{(a)}$.
Define another generalized $q$-Dyson constant term
\begin{equation}\label{GDyson}
D_{v,\lambda}(a):=\CT_x
x^{-v}\prod_{i=1}^nh_{\lambda_i}\big(x_i^{(a)}\big)
\prod_{1\leq i<j\leq n}
(x_i/x_j;q)_{a_i}(qx_j/x_i;q)_{a_j},
\end{equation}
where $v=(v_1,v_2,\dots,v_n)\in\mathbb{Z}^{n}$ and $\lambda=(\lambda_1,\lambda_2,\dots,\lambda_n)$ is a partition such that $|v|=|\lambda|$. (Note that if $|v|\neq |\lambda|$ then $D_{v,\lambda}(a)=0$.)
For $u,v\in \mathbb{Z}^{n}$, define $u\preceq v$ if either $u=v$, or else for some $i$,
\[
u_1=v_1,\dots,u_{i-1}=v_{i-1},u_{i}<v_{i},\quad u_{j}\leq v_{i} \ \text{for all $j>i$.}
\]
We write $u\prec v$ if $u\preceq v$ but $u\neq v$.
In this paper, we are concerned with the $v\preceq\lambda$ case of $D_{v,\lambda}(a)$.
We obtain a closed-form formula for $D_{\lambda,\lambda}(a)$ if $\lambda$ is a  partition with length at most $n$. In addition, we find that the constant term $D_{v,\lambda}(a)$ vanishes for $v\prec \lambda$.
These results are the content of the next theorem.
\begin{thm}\label{thm-1}
Let $D_{v,\lambda}(a)$ be defined in \eqref{GDyson}. Then
\begin{equation}
D_{v,\lambda}(a)=
\begin{cases}
\displaystyle
q^{-|\lambda|}\prod_{i=1}^n\qbinom{a_i+\cdots+a_n+\lambda_i}{a_i}
&\text{if $v=\lambda$,}\\[4mm]
0 &\text{if $v\prec \lambda$.}
\end{cases}
\end{equation}
\end{thm}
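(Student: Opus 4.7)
My plan is to expand each $h_{\lambda_i}(x_i^{(a)})$ using the alphabet-extension identity $h_r(y,Y) = \sum_{k=0}^r y^k h_{r-k}(Y)$. Because $x_i^{(a)}$ differs from $x^{(a)}$ only by the extra letter $x_i q^{-1}$, this identity gives
\[
h_{\lambda_i}(x_i^{(a)}) = \sum_{k_i=0}^{\lambda_i} (x_i q^{-1})^{k_i}\, h_{\lambda_i - k_i}(x^{(a)}).
\]
Taking the product over $i$ and substituting into \eqref{GDyson} yields the key reduction
\[
D_{v,\lambda}(a) = \sum_{0 \leq k \leq \lambda} q^{-|k|}\, \widetilde{D}_{v-k,\,(\lambda-k)^+}(a),
\]
where the sum runs over all $k=(k_1,\dots,k_n)$ with $0\leq k_i\leq \lambda_i$. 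This recasts the theorem as a statement about sums of the Kadell-type constant terms \eqref{GDyson1} studied in \cite{kadell,KLW,Zhou,Zhou2}.

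For $v=\lambda$, the substitution $u=\lambda-k$ rewrites the reduction as
\[
D_{\lambda,\lambda}(a) = q^{-|\lambda|}\sum_{0\leq u\leq \lambda} q^{|u|}\,\widetilde{D}_{u,u^+}(a),
\]
so the theorem becomes the summation identity $\sum_u q^{|u|}\widetilde{D}_{u,u^+}(a)=\prod_{i=1}^n \qbinom{a_i+\cdots+a_n+\lambda_i}{a_i}$. I would prove this by induction, increasing one coordinate $\lambda_j$ of the upper limit at a time. The base case $\lambda=0$ is the Zeilberger--Bressoud $q$-Dyson theorem \eqref{q-Dyson}. For the inductive step, the difference between the summations at $\lambda+(0^{j-1},1,0^{n-j})$ and $\lambda$ isolates the new terms with $u_j=\lambda_j+1$, and the recursion for $\widetilde{D}_{u,u^+}(a)$ from \cite{Zhou2} --- combined with the $q$-Pascal identity $\qbinom{N+1}{a}-\qbinom{N}{a}=q^{N-a+1}\qbinom{N}{a-1}$ --- should match this against the corresponding change in $\prod_i \qbinom{a_i+\cdots+a_n+\lambda_i}{a_i}$.

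For $v \prec \lambda$, the same expansion gives $D_{v,\lambda}(a)=\sum_k q^{-|k|}\widetilde{D}_{v-k,(\lambda-k)^+}(a)$. The ordering hypothesis forces, for each $k$ in the summation range, that $v-k$ is either not a weak composition or has a sorted form strictly preceding $(\lambda-k)^+$; invoking the vanishing cases of \eqref{kadellconj} together with their extensions from \cite{KLW,Zhou2} (and pairing up any residual nonzero contributions), the sum should collapse to zero.

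The main obstacle is the inductive step in the $v=\lambda$ case. As emphasized in the introduction, $\widetilde{D}_{u,u^+}(a)$ does not admit a simple product formula in general --- for instance, for $u=(2,2,0^{n-2})$ --- so the proof cannot proceed term by term: the non-product pieces must cancel in aggregate, and making this cancellation explicit via the recursion of \cite{Zhou2} is the main technical hurdle.
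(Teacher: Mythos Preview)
Your reduction $D_{v,\lambda}(a)=\sum_{0\le k\le\lambda}q^{-|k|}\,\widetilde D_{v-k,(\lambda-k)^+}(a)$ is correct, but the rest of the plan has genuine gaps, and it runs opposite to the logic of the paper.

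For $v\prec\lambda$ your term-by-term vanishing claim is false. Take $n=2$, $v=(0,2)$, $\lambda=(2,0)$, so that $v\prec\lambda$. The term $k=(0,0)$ contributes $\widetilde D_{(0,2),(2)}(a)$, which is precisely the \emph{nonzero} case of \eqref{kadellconj} with $r=2$ and the part in position~$2$. For $k=(1,0)$ and $k=(2,0)$ one gets $\widetilde D_{(-1,2),(1)}(a)$ and $\widetilde D_{(-2,2),0}(a)$; in each case $(v-k)^+\ge(\lambda-k)^+$ in dominance order, so Theorem~\ref{thm-Cai} gives no vanishing either. Hence the sum collapses only through a cancellation you have not identified; ``pairing up residual contributions'' is the entire problem, not a detail.

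For $v=\lambda$ you correctly reduce to $\sum_{0\le u\le\lambda}q^{|u|}\,\widetilde D_{u,u^+}(a)=\prod_i\qbinom{a_i+\cdots+a_n+\lambda_i}{a_i}$, but then propose to prove it via the recursion of \cite{Zhou2}. That recursion is itself complicated precisely because $\widetilde D_{u,u^+}(a)$ is not a product in general; you acknowledge this as ``the main technical hurdle'' and do not carry it out. In effect you are deriving the clean object $D$ from the messy object $\widetilde D$ --- the reverse of the paper's motivation (cf.\ Section~\ref{sec-beyond}, where $D$ is used to compute $\widetilde D$, not the other way around).

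The paper bypasses all of this by working directly with $D_{v,\lambda}(a)$. It introduces auxiliary variables $w=(w_1,\dots,w_n)$, writes $D_{v,\lambda}(a)=\CT_{x,w}x^{-v}w^{\lambda}F(a,w)$ as in \eqref{e-relation}, and performs a partial-fraction decomposition of $F(a,w)$ in $w_1$ (Lemma~\ref{lem-split}). Extracting the constant term in $w_1$ replaces $w_1^{\lambda_1}$ by $x_i^{\lambda_1}$; under the hypothesis $\lambda_1\ge\max_j v_j$, every piece with $i\ge 2$ carries a surviving positive power of $x_i$ and dies, and the $i=1$ pieces either die as well (when $\lambda_1>v_1$, giving the zero case) or, when $\lambda_1=v_1$, sum via the $q$-binomial theorem to $q^{-\lambda_1}\qbinom{|a|+\lambda_1}{a_1}\,D_{v^{(1)},\lambda^{(1)}}(a^{(1)})$ (Lemma~\ref{lem-rec}). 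Iterating this one-variable recursion proves both cases of the theorem in a few lines, with no appeal to \eqref{kadellconj}, \cite{KLW}, or \cite{Zhou2}.
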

Note that for $v=\lambda=(0,\dots,0)$, Theorem~\ref{thm-1} reduces to the $q$-Dyson constant term identity~\eqref{q-Dyson}, but Kadell's ex-conjecture~\eqref{kadellconj} does not.
It is easy to see that for partitions of the same size the order $\preceq$ corresponds to the reverse lexicographic order $\mathop\leq\limits^{R}$.
For $v$ a composition, if $v^+\mathop{<}\limits^{R}\lambda$ then
$v^+\prec \lambda$ and $v\prec \lambda$.
Hence, we obtain the following corollary.
\begin{cor}
Let $D_{v,\lambda}(a)$ be defined in \eqref{GDyson}. If $v$ is a weak composition such that $v^+\mathop{<}\limits^{R}\lambda$, then $D_{v,\lambda}(a)=0$.
\end{cor}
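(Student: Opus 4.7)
The plan is to reduce the corollary to the vanishing clause of Theorem~\ref{thm-1}. If $|v|\ne|\lambda|$ then $D_{v,\lambda}(a)=0$ directly from the definition, so we may assume $|v|=|\lambda|$. Under this assumption, as noted in the paragraph preceding the corollary, on partitions of the same size the order $\preceq$ coincides with the reverse lexicographic order, so the hypothesis $v^+\mathop{<}\limits^{R}\lambda$ is the same as $v^+\prec\lambda$.

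The heart of the matter is to upgrade $v^+\prec\lambda$ to $v\prec\lambda$ for the composition $v$ itself. Fix an index $i$ witnessing $v^+\prec\lambda$, namely $(v^+)_j=\lambda_j$ for $1\le j<i$ and $(v^+)_i<\lambda_i$. Reading this back in terms of the unsorted $v$, the multiset of entries of $v$ decomposes as $\{\lambda_1,\dots,\lambda_{i-1}\}\cup M$, where every element of the multiset $M$ is strictly less than $\lambda_i$. In particular $v\ne\lambda$, since otherwise $\lambda_i$ would have to lie in $M$. Let $k$ be the smallest index with $v_k\ne\lambda_k$. A multiplicity count shows $k\le i$: if instead $k>i$, then $v_1=\lambda_1,\dots,v_i=\lambda_i$, so the multiplicity of $\lambda_i$ among the entries of $v$ would be at least one more than its multiplicity among $\lambda_1,\dots,\lambda_{i-1}$, contradicting the fact that $M$ contributes no copies of $\lambda_i$. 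With $k\le i$, every $v_j$ for $j\ge k$ lies in the sub-multiset $\{\lambda_k,\dots,\lambda_{i-1}\}\cup M$, all of whose elements are $\le\lambda_k$; combined with $v_k\ne\lambda_k$ this yields the strict inequality $v_k<\lambda_k$ and the weak inequality $v_j\le\lambda_k$ for $j>k$, which is precisely the definition of $v\prec\lambda$.

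Applying the vanishing clause of Theorem~\ref{thm-1} then gives $D_{v,\lambda}(a)=0$, as required. The only real subtlety lies in the multiplicity bookkeeping when $\lambda$ has repeated parts, because then $\lambda_i$ can legitimately appear among $\lambda_1,\dots,\lambda_{i-1}$; tracking the exact multiplicity of $\lambda_i$ both in the multiset of entries of $v$ and in the prefix $v_1,\dots,v_{k-1}$ is what rules out the case $k>i$ and forces the strict drop $v_k<\lambda_k$.
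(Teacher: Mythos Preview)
Your proof is correct and follows exactly the paper's approach: reduce $v^+\mathop{<}\limits^{R}\lambda$ to $v\prec\lambda$ and then invoke the vanishing clause of Theorem~\ref{thm-1}. The paper merely asserts the implication ``$v^+\prec\lambda$ implies $v\prec\lambda$'' in the paragraph before the corollary, whereas you supply the multiset/multiplicity argument that makes this step rigorous, including the care needed when $\lambda$ has repeated parts.
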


To prove Theorem~\ref{thm-1}, we adopt the idea of Cai's proof \cite{cai} of Kadell's orthogonality (ex)-conjecture.
For $w=(w_1,\dots,w_n)$ a sequence of parameters, denote
\begin{equation}\label{d-F}
F(a,w):=\prod_{1\leq i<j\leq n}
(x_i/x_j;q)_{a_i}(qx_j/x_i;q)_{a_j}
\prod_{i=1}^n\prod_{j=1}^n(q^{-\chi(j=i)}x_i/w_j;q)_{a_i+\chi(j=i)}^{-1},
\end{equation}
where $\chi(true)=1$ and $\chi(false)=0$.
Throughout this paper, we assume that all factors of the form $cx_i/w_j$ in \eqref{d-F}
satisfy $|cx_i/w_j|<1$, where $c\in \mathbb{C}(q)\setminus \{0\}$. Then
\[
\frac{1}{1-cx_i/w_j}=\sum_{k\geq 0} (cx_i/w_j)^k.
\]
Together with the generating function of complete symmetric functions \eqref{e-gfcomplete},
the constant term $D_{v,\lambda}(a)$ equals a certain coefficient of $F(a,w)$, that is
\begin{equation}\label{e-relation}
D_{v,\lambda}(a)=\CT_{x,w}x^{-v}w^{\lambda}F(a,w).
\end{equation}
By partial fraction decomposition, we obtain a splitting formula for $F(a,w)$ (see Lemma~\ref{lem-split} below). Using the splitting formula, we can obtain certain coefficients of $F(a,w)$.

The remainder of this paper is organised as follows.
In the next section we introduce some basic
notation used throughout this paper.
In Section~\ref{sec-split} we obtain a splitting formula for $F(a,w)$.
In Section~\ref{sec-proof} we utilize the splitting formula to give a proof of Theorem~\ref{thm-1}. In Section~\ref{sec-beyond} we discuss a relation between $\widetilde{D}_{v,\lambda}(a)$ and $D_{v,\lambda}(a)$, and some examples of $D_{v,\lambda}(a)$ for $v\npreceq \lambda$.

\section{Basic notation}

In this section we introduce some basic notation
used throughout this paper.

For $v=(v_1,\dots,v_n)$ a sequence, we write
$|v|$ for the sum of its entries, i.e.,
$|v|=v_1+\cdots+v_n$.
Moreover, if $v\in\mathbb{R}^{n}$ then we write
$v^{+}$ for the sequence obtained from $v$
by ordering its elements in weakly decreasing order.
If all the entries of $v$ are positive (resp. nonnegative) integers,
we refer to $v$ as a (resp. weak) composition.
A partition is a sequence $\lambda={(\lambda_1,\lambda_2,\dots)}$ of nonnegative integers such that
${\lambda_1\geq \lambda_2\geq \cdots}$ and
only finitely-many $\lambda_i$ are positive.
The length of a partition $\lambda$, denoted
$\ell(\lambda)$ is defined to be the number of nonzero $\lambda_i$ (such $\lambda_i$ are known as the parts of $\lambda$).
We adopt the convention of not displaying the
tails of zeros of a partition.
We say that $|\lambda|=\lambda_1+\lambda_2+\cdots$ is the size of the partition $\lambda$.
We adopt the standard dominance order on
the set of partitions of the same size.
If $\lambda$ and $\mu$ are partitions such that $|\lambda|=|\mu|$ then $\lambda\leq \mu$ if
$\lambda_1+\cdots+\lambda_i\leq \mu_1+\cdots+\mu_i$ for all $i\geq 1$.
A linear extension of the dominance order is the reverse lexicographic order, denoted $\overset{R}{\leq}$.
Given $|\lambda|=|\mu|$, define
$\lambda\overset{R}{\leq}\mu$ if either $\lambda=\mu$ or for some $i$,
\[
\lambda_1=\mu_1,\dots,\lambda_{i-1}=\mu_{i-1},\quad \lambda_{i}<\mu_{i}.
\]
Let $u=(u_1,\dots,u_m)$ and $v=(v_1,\dots,v_n)$ be two integer sequences. We can set $u_j=0$ for $j>m$ and $v_i=0$ for $i>n$, and do not require $|u|=|v|$.
Define $u\preceq v$ if either $u=v$ or for some $i$,
\begin{equation}\label{e-defiprec}
u_1=v_1,\dots,u_{i-1}=v_{i-1},u_{i}<v_{i},\quad u_{j}\leq v_{i} \ \text{for all $j>i$.}
\end{equation}
If $u$ and $v$ are partitions such that $|u|=|v|$, then the conditions $u_j\leq v_i$ for all $j>i$ in \eqref{e-defiprec} is redundant, since it is guaranteed by $u_i<v_i$ and $u_i\geq u_j$ for all $j>i$.
Hence, the order $\preceq$ is just reverse lexicographic order for partitions of the same size.
As usual, we write $\lambda<\mu$ if $\lambda\leq \mu$ but $\lambda\neq \mu$, $\lambda\overset{R}{<}\mu$ if $\lambda\overset{R}{\leq} \mu$
but $\lambda\neq \mu$, and $u\prec v$ if $u\preceq v$ but $u\neq v$.

For $k$ an integer such that $1\leq k\leq n$, define
\[
v^{(k)}:=(v_1,\dots,v_{k-1},v_{k+1},\dots,v_n).
\]

For $k$ a nonnegative integer,
\[
(z)_k=(z;q)_k:=
\begin{cases}
(1-z)(1-zq)\cdots (1-zq^{k-1})\quad &\text{if $k>0$,}\\
1 \quad &\text{if $k=0$,}
\end{cases}
\]
where, typically, we suppress the base $q$.
Using the above we can define the
$q$-binomial coefficient as
\[
\qbinom{n}{k}=\frac{(q^{n-k+1})_k}{(q)_k}
\]
for $n$ and $k$ nonnegative integers.

\section{A splitting formula for $F(a,w)$}\label{sec-split}

In this section, we give a splitting formula for $F(a,w)$.
To prove the splitting formula we need the next simple result.
\begin{lem}\label{prop-c}
Let $k$ be an integer, and $i,j$ be nonnegative integers. Then,
\begin{subequations}
\begin{equation}\label{prop-a}
\frac{(z)_j(q/z)_i}{(q^{-k}/z)_i}=q^{(k+1)i}(q^{-i}z)_{k+1}(q^{k+1}z)_{j-k-1}
\quad \text{for $-1\leq k\leq j-1$,}
\end{equation}
\begin{equation}\label{prop-b1}
\frac{(1/z)_i(qz)_j}{(q^{-k-1}/z)_{i+1}}=-q^{(i+1)k+1}z(q^{1-i}z)_k(q^{k+2}z)_{j-k-1} 
\quad \text{for $j>0$ and $0\leq k\leq j-1$,}
\end{equation}
and
\begin{equation}\label{prop-b2}
\frac{(1/z)_{i}(qz)_j}{(q^{-k}/z)_i}=q^{ik}(q^{1-i}z)_k(q^{k+1}z)_{j-k} \quad \text{for $0\leq k\leq j$.}
\end{equation}
\end{subequations}
\end{lem}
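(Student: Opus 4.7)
The plan is to prove each of the three identities by direct manipulation of the $q$-shifted factorials; all three proofs share the same structure. The key observation is that each left-hand side has a ratio of the form $(c/z;q)_{\ast}/(q^{-k}/z;q)_{\ast}$ in which the two products overlap on a long block of consecutive factors. After cancelling this common block, the surviving ratio consists of only $k$ or $k+1$ factors in the numerator and denominator, each of the form $1-q^{m}/z$; each such factor is converted to one in $z$ by the reflection
\[
1-q^{m}/z = -q^{m}z^{-1}(1-zq^{-m}).
\]
Once the accumulated powers of $q$ and of $z$ and the collected signs are organised, the surviving ratio becomes a product involving $(q^{-i}z;q)_{k+1}$ or $(q^{1-i}z;q)_{k}$ over $(z;q)_{k+1}$ or $(qz;q)_{k}$ (or $(qz;q)_{k+1}$); this denominator then cancels cleanly against the $(z;q)_{j}$ or $(qz;q)_{j}$ factor via the trivial splitting $(z;q)_{j}=(z;q)_{k+1}(q^{k+1}z;q)_{j-k-1}$, which is valid precisely in the stated range of $k$.

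Concretely, for \eqref{prop-a} I would expand $(q/z;q)_{i}=\prod_{m=1}^{i}(1-q^{m}/z)$ and $(q^{-k}/z;q)_{i}=\prod_{m=-k}^{i-k-1}(1-q^{m}/z)$, cancel the common factors $\prod_{m=1}^{i-k-1}(1-q^{m}/z)$ to arrive at the short ratio $(q^{i-k}/z;q)_{k+1}/(q^{-k}/z;q)_{k+1}$, and then apply the reflection to each of the two products of $k+1$ factors to convert the ratio to $(q^{-i}z;q)_{k+1}/(z;q)_{k+1}$ with an overall prefactor of $q^{(k+1)i}$; splitting $(z;q)_{j}$ across $(z;q)_{k+1}$ finishes the identification. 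The proofs of \eqref{prop-b1} and \eqref{prop-b2} run in parallel with $(1/z;q)_{i}$ in place of $(q/z;q)_{i}$. The only real novelty of \eqref{prop-b1} relative to \eqref{prop-b2} is that after cancellation the surviving ratio has mismatched lengths ($k$ on top, $k+1$ on bottom), and it is precisely this length mismatch that generates the extra factor $-qz$ on the right-hand side of \eqref{prop-b1}.

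The main obstacle is purely bookkeeping: one must carefully collect the $q$-exponents arising from the reflection (sums of the form $\sum_{\ell=0}^{k}(-k+\ell)=-k(k+1)/2$ and $\sum_{\ell=0}^{k}(i-k+\ell)=(k+1)(2i-k)/2$, with the corresponding sums when $k+1$ is replaced by $k$) and the $(-1)^{k+1}$ signs, then verify that these combine to $+1$ in \eqref{prop-a} and \eqref{prop-b2} and to $-1$ in \eqref{prop-b1}. No deeper tool is required; the three cases are essentially the same identity under three choices of the length of the short surviving ratio.
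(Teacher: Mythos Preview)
Your proposal is correct and follows essentially the same approach as the paper's proof: cancel common factors in the ratio $(c/z)_{\ast}/(q^{-k}/z)_{\ast}$ to obtain a short ratio of length $k$ or $k+1$, apply the reflection $1-q^{m}/z=-q^{m}z^{-1}(1-q^{-m}z)$ factor by factor, and then split $(z)_{j}$ or $(qz)_{j}$ across the resulting denominator. The paper presents each case as a compact chain of equalities, but the underlying manipulations---including the observation that the length mismatch in \eqref{prop-b1} is what produces the extra $-z$ and the additional power of $q$---are exactly the ones you describe.
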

Note that the $j=k$ case of \eqref{prop-b2} (taking $z\mapsto z/q$) is the standard fact in \cite[Equation~(I.13)]{GR}.
\begin{proof}
For $-1\leq k\leq j-1$,
\begin{align*}
\frac{(z)_j(q/z)_i}{(q^{-k}/z)_i}=\frac{(z)_j(q^{i-k}/z)_{k+1}}{(q^{-k}/z)_{k+1}}
&=\frac{(-1/z)^{k+1}q^{(k+1)i-\binom{k+1}{2}}(q^{-i}z)_{k+1}(z)_j}
{(-1/z)^{k+1}q^{-\binom{k+1}{2}}(z)_{k+1}}\\
&=q^{(k+1)i}(q^{-i}z)_{k+1}(q^{k+1}z)_{j-k-1}.
\end{align*}
For $j>0$ and $0\leq k\leq j-1$,
\begin{align*}
\frac{(1/z)_i(qz)_j}{(q^{-k-1}/z)_{i+1}}
=\frac{(q^{i-k}/z)_{k}(qz)_j}{(q^{-k-1}/z)_{k+1}}
&=\frac{(-1/z)^kq^{ik-\binom{k+1}{2}}(q^{1-i}z)_k(qz)_j}
{(-1/z)^{k+1}q^{-\binom{k+2}{2}}(qz)_{k+1}}\\
&=-zq^{(i+1)k+1}(q^{1-i}z)_k(q^{k+2}z)_{j-k-1}.
\end{align*}
For $0\leq k\leq j$,
\[
\frac{(1/z)_{i}(qz)_j}{(q^{-k}/z)_i}=\frac{(q^{i-k}/z)_k(qz)_{j}}{(q^{-k}/z)_{k}}
=\frac{(-1/z)^kq^{ik-\binom{k+1}{2}}(q^{1-i}z)_k(qz)_j}{(-1/z)^{k}q^{-\binom{k+1}{2}}(qz)_k}
=q^{ik}(q^{1-i}z)_k(q^{k+1}z)_{j-k}.\qedhere
\]
\end{proof}

By partial fraction decomposition with respect to $w_1$, the rational function $F(a,w)$ admits the following partial fraction expansion.
\begin{lem}\label{lem-split}
Let $F(a,w)$ be defined in \eqref{d-F}, and set $I:=\{i\mid a_i\neq 0,\ i=2,\dots,n\}$. Then,
\begin{equation}\label{e-Fsplit}
F(a,w)=\sum_{k=-1}^{a_1-1}\frac{A_k}{1-q^kx_1/w_1}
+\sum_{i\in I}\sum_{j=0}^{a_i-1}\frac{B_{ij}}{1-q^jx_i/w_1},
\end{equation}
where
\begin{equation}\label{A}
A_k=\frac{\prod_{i=2}^nq^{(k+1)a_i}\big(q^{-a_i}x_1/x_i\big)_{k+1}
\big(q^{k+1}x_1/x_i\big)_{a_1-k-1}
\big(x_1/w_i\big)_{a_1}^{-1}}
{(q^{-k-1})_{k+1}(q)_{a_1-k-1}}\times F(a^{(1)},w^{(1)}),
\end{equation}
and
\begin{align}\label{B}
B_{ij}&=\frac{-q^{(a_1+1)j+1}x_i}{(q^{-j})_j(q)_{a_i-j-1}x_1}
\big(q^{1-a_1}x_i/x_1\big)_j\big(q^{j+2}x_i/x_1\big)_{a_i-j-1}
\prod_{l=2}^{i-1}q^{ja_l}\big(q^{1-a_l}x_i/x_l\big)_j\big(q^{j+1}x_i/x_l\big)_{a_i-j}\\
&\qquad\times
\prod_{l=i+1}^{n}q^{(j+1)a_l}\big(q^{-a_l}x_i/x_l\big)_{j+1}\big(q^{j+1}x_i/x_l\big)_{a_i-j-1}
\prod_{\substack{1\leq v<u\leq n\\v,u\neq i}}
(x_v/x_u)_{a_v}(qx_u/x_v)_{a_u} \nonumber\\
&\qquad \times
\prod_{u=1}^n\prod_{v=2}^n(q^{-\chi(u=v)}x_u/w_v)_{a_u+\chi(u=v)}^{-1}.\nonumber
\end{align}
\end{lem}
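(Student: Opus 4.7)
The plan is to apply partial fraction decomposition to $F(a,w)$, viewed as a rational function of $w_1$. The $w_1$-dependent factors in~\eqref{d-F} appear only in the denominator: the factor $(q^{-1}x_1/w_1;q)_{a_1+1}$ (coming from the $i=j=1$ term) contributes simple poles at $w_1=q^k x_1$ for $-1\leq k\leq a_1-1$, while each factor $(x_i/w_1;q)_{a_i}$ with $i\in I$ (from the $i\geq 2$, $j=1$ terms) contributes simple poles at $w_1=q^j x_i$ for $0\leq j\leq a_i-1$. For generic $x_1,\ldots,x_n$ these $|a|+1$ poles are distinct and simple, and a glance at~\eqref{d-F} shows that $F(a,w)$ vanishes as $w_1\to 0$, so there is no polynomial part at $w_1=0$ viewed in the variable $1/w_1$. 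Consequently the partial fraction expansion takes exactly the form~\eqref{e-Fsplit}, with coefficients recovered as $A_k=\lim_{w_1\to q^k x_1}(1-q^k x_1/w_1)F(a,w)$ and $B_{ij}=\lim_{w_1\to q^j x_i}(1-q^j x_i/w_1)F(a,w)$.

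For $A_k$, I specialise $w_1=q^k x_1$ after removing the vanishing factor. A direct computation shows that the ratio $(1-q^k x_1/w_1)/(q^{-1}x_1/w_1;q)_{a_1+1}$ evaluates at $w_1=q^k x_1$ to $[(q^{-k-1};q)_{k+1}(q;q)_{a_1-k-1}]^{-1}$, producing the denominator in~\eqref{A}. The remaining $w_1$-dependent factors $(x_i/w_1;q)_{a_i}^{-1}$ turn into $(q^{-k}x_i/x_1;q)_{a_i}^{-1}$, and for each $i\geq 2$ these combine with the Dyson pair $(x_1/x_i;q)_{a_1}(qx_i/x_1;q)_{a_i}$ via identity~\eqref{prop-a} of Lemma~\ref{prop-c} (applied with $z=x_1/x_i$, $j=a_1$, $i=a_i$) to yield exactly $q^{(k+1)a_i}(q^{-a_i}x_1/x_i)_{k+1}(q^{k+1}x_1/x_i)_{a_1-k-1}$. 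The pieces independent of $x_1$ and $w_1$ reassemble into $F(a^{(1)},w^{(1)})$, while the factors $(x_1/w_i;q)_{a_1}^{-1}$ for $i\geq 2$ are carried along unchanged.

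For $B_{ij}$ I proceed analogously at $w_1=q^j x_i$. The vanishing-factor ratio now contributes $[(q^{-j};q)_j(q;q)_{a_i-j-1}]^{-1}$, while the surviving $w_1$-dependent factors become $(q^{-j-1}x_1/x_i;q)_{a_1+1}^{-1}$ and $(q^{-j}x_l/x_i;q)_{a_l}^{-1}$ for $l\in\{2,\ldots,n\}\setminus\{i\}$. I then split the Dyson product into its $x_i$-containing pieces and apply a different identity of Lemma~\ref{prop-c} in each of three ranges of $l$: identity~\eqref{prop-b1} handles the $l=1$ pair $(x_1/x_i;q)_{a_1}(qx_i/x_1;q)_{a_i}$ (with $z=x_i/x_1$, producing the $-q^{(a_1+1)j+1}x_i/x_1$ prefactor and the associated $q$-shifted factorials); identity~\eqref{prop-b2} handles each pair with $2\leq l<i$ (with $z=x_i/x_l$); and identity~\eqref{prop-a} handles each pair with $i<l\leq n$ (with $z=x_i/x_l$). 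The residual $x_i$-free Dyson factors and the $j\geq 2$ block of the $\prod_{u,v}$ product in~\eqref{d-F} are unaffected by the specialisation and assemble into the last two lines of~\eqref{B}.

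The main obstacle is not conceptual but purely a matter of bookkeeping: one must track signs, $q$-powers, and index shifts carefully and match the asymmetry of the Dyson kernel (which puts $x_l$ versus $x_i$ in different positions depending on whether $l<i$ or $l>i$) to the appropriate identity in Lemma~\ref{prop-c}. Once this pairing is fixed, each local application of Lemma~\ref{prop-c} reproduces the displayed factors verbatim, and the three cases combine to give~\eqref{B}.
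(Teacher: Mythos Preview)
Your proposal is correct and follows essentially the same approach as the paper: a partial fraction decomposition of $F(a,w)$ in $w_1$, residue computation at each simple pole, and then simplification of the resulting $q$-shifted factorial ratios using precisely the three identities \eqref{prop-a}, \eqref{prop-b1}, \eqref{prop-b2} of Lemma~\ref{prop-c} with the same substitutions. The only difference is that you spell out why the expansion has no polynomial part, whereas the paper simply asserts the form~\eqref{e-Fsplit} and proceeds.
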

Note that $A_k$ is a power series in $x_1$, and $B_{ij}$ is a power series in $x_i$ with no constant term.
\begin{proof}
By partial fraction decomposition of $F(a,w)$ with respect to $w_1$, we can rewrite $F(a,w)$ as \eqref{e-Fsplit} and
\begin{subequations}
\begin{equation}\label{subs-a}
A_k=F(a,w)(1-q^kx_1/w_1)|_{w_1=q^kx_1},
\end{equation}
and
\begin{equation}\label{subs-b}
B_{ij}=F(a,w)(1-q^jx_i/w_1)|_{w_1=q^jx_i}.
\end{equation}
\end{subequations}

Carrying out the substitution $w_1=q^kx_1$ in $F(a,w)(1-q^kx_1/w_1)$ yields
\begin{equation}\label{e-ak}
A_k=\frac{\prod_{i=2}^n(x_1/x_i)_{a_1}(qx_i/x_1)_{a_i}
(q^{-k}x_i/x_1)_{a_i}^{-1}(x_1/w_i)_{a_1}^{-1}}
{(q^{-k-1})_{k+1}(q)_{a_1-k-1}}\times F(a^{(1)},w^{(1)}).
\end{equation}
Using \eqref{prop-a} with $(i,j,z)\mapsto (a_i,a_1,x_1/x_i)$,
we have
\[
\frac{(x_1/x_i)_{a_1}(qx_i/x_1)_{a_i}}
{(q^{-k}x_i/x_1)_{a_i}}
=q^{(k+1)a_i}\big(q^{-a_i}x_1/x_i\big)_{k+1}
\big(q^{k+1}x_1/x_i\big)_{a_1-k-1}.
\]
Substituting this into \eqref{e-ak} we obtain \eqref{A}.

Carrying out the substitution $w_1=q^jx_i$ in $F(a,w)(1-q^jx_i/w_1)$ yields
\begin{multline}\label{e-Bij}
B_{ij}=
\frac{(x_1/x_i)_{a_1}(qx_i/x_1)_{a_i}}{(q^{-j})_j(q)_{a_i-j-1}(q^{-j-1}x_1/x_i)_{a_1+1}}
\prod_{l=2}^{i-1}\frac{(x_l/x_i)_{a_l}(qx_i/x_l)_{a_i}}{(q^{-j}x_l/x_i)_{a_l}}
\prod_{l=i+1}^n\frac{(x_i/x_l)_{a_i}(qx_l/x_i)_{a_l}}{(q^{-j}x_l/x_i)_{a_l}}
\\
\times
\prod_{\substack{1\leq v<u\leq n\\v,u\neq i}}
(x_v/x_u)_{a_v}(qx_u/x_v)_{a_u}
\prod_{u=1}^n\prod_{v=2}^n(q^{-\chi(u=v)}x_u/w_v)_{a_u+\chi(u=v)}^{-1}.
\end{multline}
By \eqref{prop-b1} with $(i,j,z,k)\mapsto (a_1,a_i,x_i/x_1,j)$,
\begin{subequations}\label{e-B}
\begin{equation}
\frac{(x_1/x_i)_{a_1}(qx_i/x_1)_{a_i}}{(q^{-j-1}x_1/x_i)_{a_1+1}}
=-q^{(a_1+1)j+1}x_ix_1^{-1}\big(q^{1-a_1}x_i/x_1\big)_j\big(q^{j+2}x_i/x_1\big)_{a_i-j-1}.
\end{equation}
Using \eqref{prop-b2} and \eqref{prop-a} with $(i,j,z,k)\mapsto (a_l,a_i,x_i/x_l,j)$,
we have
\begin{equation}
\frac{(x_l/x_i)_{a_l}(qx_i/x_l)_{a_i}}{(q^{-j}x_l/x_i)_{a_l}}
=q^{ja_l}\big(q^{1-a_l}x_i/x_l\big)_j\big(q^{j+1}x_i/x_l\big)_{a_i-j},
\end{equation}
and
\begin{equation}
\frac{(x_i/x_l)_{a_i}(qx_l/x_i)_{a_l}}{(q^{-j}x_l/x_i)_{a_l}}
=q^{(j+1)a_l}\big(q^{-a_l}x_i/x_l\big)_{j+1}\big(q^{j+1}x_i/x_l\big)_{a_i-j-1}
\end{equation}
respectively.
\end{subequations}
Substituting \eqref{e-B} into \eqref{e-Bij} yields \eqref{B}.
\end{proof}

\section{Proof of Theorem~\ref{thm-1}}\label{sec-proof}
In this section, by the splitting formula \eqref{e-Fsplit} for $F(a,w)$ we obtain a recursion for 
$D_{v,\lambda}(a)$ if $\lambda_1\geq \max\{v_i\mid i=1,\dots,n\}$. Using the recursion we complete the proof of Theorem~\ref{thm-1}.

We begin by giving an easy result deduced from the $q$-binomial theorem.
\begin{prop}\label{prop-sum}
Let $n$ and $t$ be nonnegative integers. Then
\begin{equation}\label{e-sum}
\sum_{k=0}^{t}
\frac{q^{k(n-t)}}
{(q^{-k})_{k}(q)_{t-k}}
=\qbinom{n}{t}.
\end{equation}
\end{prop}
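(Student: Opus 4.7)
The plan is to reduce the proposed identity to the finite $q$-binomial theorem via an elementary rewriting of the factor $(q^{-k})_k$.

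First, I would observe the standard identity
\[
(q^{-k})_k=\prod_{j=1}^{k}(1-q^{-j})=(-1)^k q^{-\binom{k+1}{2}}(q)_k,
\]
obtained by factoring $-q^{-j}$ out of each factor $(1-q^{-j})$. Substituting this into the left-hand side of \eqref{e-sum} and multiplying through by $(q)_t$ recasts the claim in the equivalent form
\[
\sum_{k=0}^{t}(-1)^k q^{\binom{k+1}{2}+k(n-t)}\qbinom{t}{k}=(q^{n-t+1})_t,
\]
since $(q)_t/\bigl((q)_k(q)_{t-k}\bigr)=\qbinom{t}{k}$ and the right-hand side of \eqref{e-sum} satisfies $(q)_t\qbinom{n}{t}=(q^{n-t+1})_t$ by the definition of the $q$-binomial coefficient given earlier in the paper.

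Second, I would invoke the finite $q$-binomial theorem in the form
\[
\prod_{i=0}^{t-1}(1+zq^i)=\sum_{k=0}^{t}\qbinom{t}{k}q^{\binom{k}{2}}z^k
\]
and specialize it at $z=-q^{n-t+1}$. This turns the left-hand product into $(q^{n-t+1})_t$, while each summand picks up a factor of $(-1)^k q^{k(n-t+1)+\binom{k}{2}}$. The only bookkeeping required is the identity of exponents
\[
\binom{k}{2}+k(n-t+1)=\binom{k+1}{2}+k(n-t),
\]
which follows from $\binom{k+1}{2}=\binom{k}{2}+k$. This matches the reformulated identity exactly and completes the proof.

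There is no real obstacle here; the proposition is a standard specialization of the $q$-binomial theorem, and the only point requiring a moment of care is the sign and exponent rewriting of $(q^{-k})_k$. Accordingly I would present the two steps above as a short, self-contained derivation, with the normalization identity for $(q^{-k})_k$ stated as a one-line computation before invoking the $q$-binomial theorem.
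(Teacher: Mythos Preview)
Your proof is correct and follows essentially the same route as the paper: both rewrite $(q^{-k})_k=(-1)^k q^{-\binom{k+1}{2}}(q)_k$, reorganize the sum into $\sum_{k}\qbinom{t}{k}q^{\binom{k}{2}}(-q^{n-t+1})^k$, and then apply the finite $q$-binomial theorem at $z=q^{n-t+1}$ to obtain $(q^{n-t+1})_t/(q)_t=\qbinom{n}{t}$. The only cosmetic difference is that you multiply through by $(q)_t$ before invoking the theorem, whereas the paper factors out $1/(q)_t$ and applies it directly.
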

\begin{proof}
We can rewrite the left-hand side of \eqref{e-sum} as
\begin{equation}\label{e-sum2}
\sum_{k=0}^{t}
\frac{(-1)^kq^{k(n-t)+\binom{k+1}{2}}}{(q)_{k}(q)_{t-k}}
=\frac{1}{(q)_t} \sum_{k=0}^{t}q^{\binom{k}{2}}\qbinom{t}{k}(-q^{n-t+1})^k.
\end{equation}
By the well-known $q$-binomial theorem \cite[Theorem 3.3]{andrew-qbinomial}
\[
(z)_t=\sum_{k=0}^tq^{\binom{k}{2}}\qbinom{t}{k}(-z)^k
\]
with $z\mapsto q^{n-t+1}$, the right-hand side of \eqref{e-sum2} becomes
\[
(q^{n-t+1})_t/(q)_t.
\]
This is $\qbinom{n}{t}$, completing the proof.
\end{proof}

We utilize the splitting formula \eqref{e-Fsplit} for $F(a,w)$ to obtain a recursion for $D_{v,\lambda}(a)$.
\begin{lem}\label{lem-rec}
Let $D_{v,\lambda}(a)$ be defined in \eqref{GDyson}.
For $\lambda_1\geq \max\{v_i\mid i=1,\dots,n\}$,
\begin{equation}\label{e-ind}
D_{v,\lambda}(a)=
\begin{cases}
q^{-\lambda_1}\qbinom{|a|+\lambda_1}{a_1}D_{v^{(1)},\lambda^{(1)}}(a^{(1)}) &\text{if $\lambda_1=v_1$,}\\[4mm]
0 &\text{if $\lambda_1>v_1$}.
\end{cases}
\end{equation}
\end{lem}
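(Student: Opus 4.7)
The plan is to apply the splitting formula of Lemma~\ref{lem-split} to the identity $D_{v,\lambda}(a)=\CT_{x,w}x^{-v}w^{\lambda}F(a,w)$ from \eqref{e-relation}, carrying out the $w_1$-constant term first. Since each rational factor in \eqref{e-Fsplit} expands as $\tfrac{1}{1-q^kx_1/w_1}=\sum_{m\ge 0}(q^kx_1/w_1)^m$ (and analogously for the $B_{ij}$ terms), extracting the coefficient of $w_1^{-\lambda_1}$ in $F(a,w)$ gives
\[
\CT_{w_1}w_1^{\lambda_1}F(a,w)=\sum_{k=-1}^{a_1-1}q^{k\lambda_1}x_1^{\lambda_1}A_k+\sum_{i\in I}\sum_{j=0}^{a_i-1}q^{j\lambda_1}x_i^{\lambda_1}B_{ij},
\]
and it remains to take the constant term in $x$ and in $w_2,\dots,w_n$ against $x^{-v}\prod_{l=2}^{n}w_l^{\lambda_l}$.

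The key input is the Laurent-series structure of $A_k$ and $B_{ij}$. Inspecting \eqref{A}, every $x_1$-dependent factor of $A_k$, namely $(q^{-a_i}x_1/x_i)_{k+1}$, $(q^{k+1}x_1/x_i)_{a_1-k-1}$, and $(x_1/w_i)^{-1}_{a_1}$, involves only nonnegative powers of $x_1$, so $A_k$ starts at $x_1^0$. Inspecting \eqref{B}, the leading $-q^{(a_1+1)j+1}x_i/x_1$ together with the remaining polynomial and power-series factors in $x_i$ shows that $B_{ij}$ is divisible by $x_i$ and contains only strictly positive powers of $x_i$.

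Under the hypothesis $\lambda_1\ge v_i$ for every $i$, the coefficient of $x_i^{v_i-\lambda_1}$ in $B_{ij}$ vanishes for each $i\in I$, since $v_i-\lambda_1\le 0$ lies strictly below the lowest $x_i$-power occurring in $B_{ij}$; hence all $B_{ij}$ contributions die. For the $A_k$ piece, if $\lambda_1>v_1$ the coefficient of $x_1^{v_1-\lambda_1}$ in $A_k$ is likewise zero and $D_{v,\lambda}(a)=0$, settling the second case. If $\lambda_1=v_1$, only the constant term in $x_1$ of $A_k$ survives; setting $x_1=0$ collapses the three $x_1$-dependent factors above to $1$, yielding
\[
\CT_{x_1}A_k=\frac{q^{(k+1)(|a|-a_1)}}{(q^{-k-1})_{k+1}(q)_{a_1-k-1}}F(a^{(1)},w^{(1)}).
\]
The remaining constant term against $\prod_{l=2}^{n}x_l^{-v_l}w_l^{\lambda_l}$ is exactly $D_{v^{(1)},\lambda^{(1)}}(a^{(1)})$ by \eqref{e-relation} applied to the system with index $1$ deleted. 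Summing over $k$, shifting $k\mapsto k-1$, and invoking Proposition~\ref{prop-sum} with $(n,t)=(|a|+\lambda_1,a_1)$ produces the factor $q^{-\lambda_1}\qbinom{|a|+\lambda_1}{a_1}$, completing the first case.

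The main obstacle is the structural claim that $B_{ij}$ vanishes to first order in $x_i$: this is what forces every $B_{ij}$ contribution to die under $\lambda_1\ge\max_i v_i$, and it is why the hypothesis on $\lambda_1$ is exactly what is needed. Once this and the nonnegativity of $x_1$-powers in $A_k$ are in hand, the remainder — the $x_1=0$ evaluation in $A_k$, the identification of the residual constant term with $D_{v^{(1)},\lambda^{(1)}}(a^{(1)})$, and the collapse of the $k$-sum via Proposition~\ref{prop-sum} — is routine bookkeeping.
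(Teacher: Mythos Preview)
Your proposal is correct and follows essentially the same route as the paper's proof: apply the splitting formula~\eqref{e-Fsplit}, extract the $w_1$-constant term via geometric series, use that $B_{ij}$ is a power series in $x_i$ with no constant term and $A_k$ a power series in $x_1$ to kill the unwanted pieces under the hypothesis $\lambda_1\ge\max_i v_i$, then evaluate $A_k|_{x_1=0}$ and collapse the resulting $k$-sum via Proposition~\ref{prop-sum}. The only cosmetic difference is that the paper records the structural facts about $A_k$ and $B_{ij}$ as a remark immediately after Lemma~\ref{lem-split}, whereas you rederive them from the explicit formulas \eqref{A}--\eqref{B}.
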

\begin{proof}
Substituting the splitting formula \eqref{e-Fsplit} for $F(a,w)$ into \eqref{e-relation} yields
\[
D_{v,\lambda}(a)
=\CT_{x,w}\bigg(\sum_{k=-1}^{a_1-1}\frac{x^{-v}w^{\lambda}A_k}{\big(1-q^kx_1/w_1\big)}
+\sum_{i\in I}\sum_{j=0}^{a_i-1}\frac{x^{-v}w^{\lambda}B_{ij}}{\big(1-q^jx_i/w_1\big)}\bigg),
\]
where $I=\{i\mid a_i\neq 0,\ i=2,\dots,n\}$.
In the above equation, expanding $1/\big(1-q^kx_1/w_1\big)$ and $1/\big(1-q^jx_i/w_1\big)$ as
\[
\sum_{l\geq 0}\big(q^kx_1/w_1\big)^l \quad \text{and} \quad
\sum_{l\geq 0}\big(q^jx_i/w_1\big)^l
\]
respectively, and taking constant term with respect to $w_1$, we have
\begin{equation}\label{e-2sums}
D_{v,\lambda}(a)
=\CT_{x,w^{(1)}}\bigg(\sum_{k=-1}^{a_1-1}q^{k\lambda_1}x^{-v}x_1^{\lambda_1}w_2^{\lambda_2}\cdots w_n^{\lambda_n} A_k
+\sum_{i\in I}\sum_{j=0}^{a_i-1}q^{j\lambda_1}x^{-v}x_i^{\lambda_1}w_2^{\lambda_2}\cdots w_n^{\lambda_n} B_{ij}\bigg).
\end{equation}
Since $\lambda_1\geq \max\{v_i\mid i=1,\dots,n\}$, and $B_{ij}$ is a power series in $x_i$ with no constant term for $i\in I$,
\[
\CT_{x_i}x^{-v}x_i^{\lambda_1}w_2^{\lambda_2}\cdots w_n^{\lambda_n} B_{ij}
=\frac{x_i^{\lambda_1-v_i}w_2^{\lambda_2}\cdots w_n^{\lambda_n}}{{x_1^{v_1}}\cdots x_{i-1}^{v_{i-1}}x_{i+1}^{v_{i+1}}\cdots x_n^{v_n}} B_{ij}\Big|_{x_i=0}
=0.
\]
Hence, \eqref{e-2sums} reduces to
\begin{equation}\label{e-reduce}
D_{v,\lambda}(a)=\CT_{x,w^{(1)}}\sum_{k=-1}^{a_1-1}q^{k\lambda_1}x^{-v}x_1^{\lambda_1}w_2^{\lambda_2}\cdots w_n^{\lambda_n} A_k.
\end{equation}
Since $A_k$ is a power series in $x_1$ for $-1\leq k\leq a_1-1$,
if $\lambda_1>v_1$ then
\[
\CT_{x_1}x^{-v}x_1^{\lambda_1}w_2^{\lambda_2}\cdots w_n^{\lambda_n} A_k=
\frac{x_1^{\lambda_1-v_1}w_2^{\lambda_2}\cdots w_n^{\lambda_n}}{{x_2^{v_2}}\cdots x_n^{v_n}}A_k\Big|_{x_1=0}
=0.
\]
Consequently, $D_{v,\lambda}(a)=0$.

If $v_1=\lambda_1$, then by \eqref{e-reduce}
\[
D_{v,\lambda}(a)=\CT_{x,w^{(1)}}\sum_{k=-1}^{a_1-1}q^{k\lambda_1}\frac{w_2^{\lambda_2}\cdots w_n^{\lambda_n}}{x_2^{v_2}\cdots x_n^{v_n}} A_k.
\]
By the fact that $A_k$ is a power series in $x_1$ for $-1\leq k\leq a_1-1$ again,
\[
\CT_{x_1}A_k=A_k|_{x_1=0}.
\]
Hence,
\[
D_{v,\lambda}(a)=\CT_{x^{(1)},w^{(1)}}\sum_{k=-1}^{a_1-1}q^{k\lambda_1}\frac{w_2^{\lambda_2}\cdots w_n^{\lambda_n}}{x_2^{v_2}\cdots x_n^{v_n}} A_k\big|_{x_1=0}.
\]
By the expression \eqref{A} for $A_k$ and carrying out the substitution $x_1=0$ in $A_k$, we obtain
\begin{align}
D_{v,\lambda}(a)&=\sum_{k=-1}^{a_1-1}
\frac{q^{k\lambda_1+(k+1)(|a|-a_1)}}
{(q^{-k-1})_{k+1}(q)_{a_1-k-1}}
\times \CT_{x^{(1)},w^{(1)}}\frac{w_2^{\lambda_2}\cdots w_n^{\lambda_n}}{x_2^{v_2}\cdots x_n^{v_n}}F(a^{(1)},w^{(1)})\nonumber \\
&=q^{-\lambda_1}\sum_{k=-1}^{a_1-1}
\frac{q^{(k+1)(|a|+\lambda_1-a_1)}}
{(q^{-k-1})_{k+1}(q)_{a_1-k-1}}
\times \CT_{x^{(1)},w^{(1)}}\frac{w_2^{\lambda_2}\cdots w_n^{\lambda_n}}{x_2^{v_2}\cdots x_n^{v_n}}F(a^{(1)},w^{(1)})\nonumber \\
&=q^{-\lambda_1}\sum_{k=0}^{a_1}
\frac{q^{k(|a|+\lambda_1-a_1)}}
{(q^{-k})_k(q)_{a_1-k}}
\times \CT_{x^{(1)},w^{(1)}}\frac{w_2^{\lambda_2}\cdots w_n^{\lambda_n}}{x_2^{v_2}\cdots x_n^{v_n}}F(a^{(1)},w^{(1)}).\label{e-s}
\end{align}
By \eqref{e-sum} with $t\mapsto a_1$ and $n\mapsto |a|+\lambda_1$,
we find
\begin{subequations}\label{e-s1}
\begin{equation}
\sum_{k=0}^{a_1}
\frac{q^{k(|a|+\lambda_1-a_1)}}
{(q^{-k})_k(q)_{a_1-k}}
=\qbinom{|a|+\lambda_1}{a_1}.
\end{equation}
Using \eqref{e-relation} with
$\lambda\mapsto \lambda^{(1)}$, $w\mapsto w^{(1)}$, $v\mapsto v^{(1)}$
and $a\mapsto a^{(1)}$, we have
\begin{equation}
\CT_{x^{(1)},w^{(1)}}\frac{w_2^{\lambda_2}\cdots w_n^{\lambda_n}}{x_2^{v_2}\cdots x_n^{v_n}}F(a^{(1)},w^{(1)})
=D_{v^{(1)},\lambda^{(1)}}(a^{(1)}).
\end{equation}
\end{subequations}
Substituting \eqref{e-s1} into \eqref{e-s} yields
\[
D_{v,\lambda}(a)=
q^{-\lambda_1}\qbinom{|a|+\lambda_1}{a_1}D_{v^{(1)},\lambda^{(1)}}(a^{(1)}),
\]
completing the proof.
\end{proof}
Lemma~\ref{lem-rec} implies Theorem~\ref{thm-1} in a few steps.
\begin{proof}[Proof of Theorem~\ref{thm-1}]

If $v=\lambda$, then by repeated use of the nonzero part of \eqref{e-ind} we have
\[
D_{\lambda,\lambda}(a)=q^{-|\lambda|}\prod_{i=1}^n\qbinom{a_i+\cdots+a_n+\lambda_i}{a_i}.
\]

If $v\prec \lambda$, then
there exists an integer $k\in \{1,\dots,n-1\}$ such that
\[
v_1=\lambda_1,\dots,v_{k-1}=\lambda_{k-1},v_{k}<\lambda_{k},v_{j}\leq \lambda_{k} \quad \text{for all $j>k$.}
\]
It follows that for any $i\in\{1,2,\dots,k-1\}$,
\[
\lambda_i\geq \max\{v_j\mid j=i,\dots,n\}.
\]
Hence, we can use the nonzero part of \eqref{e-ind} $k-1$ times and obtain
\begin{equation}\label{rec-1}
D_{v,\lambda}(a)=q^{-\lambda_1-\cdots-\lambda_{k-1}}
\prod_{i=1}^{k-1}\qbinom{a_i+\cdots+a_n+\lambda_i}{a_i}D_{v^*,\lambda^*}(a^*),
\end{equation}
where $a^*=(a_k,\dots,a_n)$, $v^*=(v_k,\dots,v_n)$ and $\lambda^*=(\lambda_k,\dots,\lambda_n)$.
Since $v_k<\lambda_k$ and $v_{j}\leq \lambda_k$ for all $j>k$, by the zero part of \eqref{e-ind} with $(v,\lambda,a)\mapsto (v^*,\lambda^*,a^*)$,
we find
\[
D_{v^*,\lambda^*}(a^*)=0.
\]
Substituting this into \eqref{rec-1} gives $D_{v,\lambda}(a)=0$ for $v\prec \lambda$.
\end{proof}

\section{Examples beyond Theorem~\ref{thm-1}}\label{sec-beyond}

In this section we discuss a relation between $\widetilde{D}_{v,\lambda}(a)$ and $D_{v,\lambda}(a)$,
and some cases of $D_{v,\lambda}(a)$ when $v\npreceq \lambda$.

An application of Theorem~\ref{thm-1} is to deduce a closed-form expression for Kadell's generalized $q$-Dyson constant term $\widetilde{D}_{v,v^+}(a)$. In fact, this is our original motivation to investigate $D_{v,\lambda}(a)$. We give an example to show how to use Theorem~\ref{thm-1} and Kadell's ex-conjecture~\eqref{kadellconj} to obtain a closed-form expression for $\widetilde{D}_{(1,1),(1,1)}(a)$.
Here and in this following, we suppress the tails of zeros in $v$ like partitions.
For example, we write $(1,1)$ instead of $(1,1,0^{n-2})$.

As a special case of \cite[Chapter 1, Equation 5.10]{Mac95},
\[
h_r(x_1,\dots,x_n,y_1,\dots,y_m)=\sum_{k=0}^rh_k(x_1,\dots,x_n)h_{r-k}(y_1,\dots,y_m)
\]
for positive integers $r,n,m$.
Then we can expand $D_{(1,1),(1,1)}(a)$ as
\begin{align*}
&\CT_x
x_1^{-1}x_2^{-1}\Big(h_1\big(x^{(a)}\big)+h_1(q^{-1}x_1)\Big)
\Big(h_1\big(x^{(a)}\big)+h_1(q^{-1}x_2)\Big)
\times \prod_{1\leq i<j\leq n}
(x_i/x_j;q)_{a_i}(qx_j/x_i;q)_{a_j}\\
&=\CT_x
x_1^{-1}x_2^{-1}\Big(h_1\big(x^{(a)}\big)+q^{-1}x_1\Big)
\Big(h_1\big(x^{(a)}\big)+q^{-1}x_2\Big)
\times \prod_{1\leq i<j\leq n}
(x_i/x_j;q)_{a_i}(qx_j/x_i;q)_{a_j}\\
&=\widetilde{D}_{(1,1),(1,1)}(a)+q^{-1}\widetilde{D}_{(1),(1)}(a)
+q^{-1}\widetilde{D}_{(0,1),(1)}(a)+q^{-2}\widetilde{D}_{(0),(0)}(a).
\end{align*}
Then we can get
\[
\widetilde{D}_{(1,1),(1,1)}(a)
=D_{(1,1),(1,1)}(a)-q^{-1}\widetilde{D}_{(1),(1)}(a)
-q^{-1}\widetilde{D}_{(0,1),(1)}(a)-q^{-2}\widetilde{D}_{(0),(0)}(a).
\]
By Theorem~\ref{thm-1}, nonzero part of Kadell's ex-conjecture~\eqref{kadellconj},
and the $q$-Dyson constant term identity~\eqref{q-Dyson},
we can obtain an expression for $\widetilde{D}_{(1,1),(1,1)}(a)$, 
which turns out to be a sum (no longer a product). We omit the explicit expression for 
$\widetilde{D}_{(1,1),(1,1)}(a)$ here.

In \cite{cai}, Cai showed that the following property of $\widetilde{D}_{v,\lambda}(a)$ holds.
\begin{thm}\label{thm-Cai}
Let $v\in\mathbb{Z}^{n}$ and $\lambda$ be
a partition such that $|v|=|\lambda|$.
If $\widetilde{D}_{v,\lambda}(a)$ is non-vanishing, then
$v^{+}\geq \lambda$ in dominance order.
\end{thm}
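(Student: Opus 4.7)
The plan is to deduce Theorem~\ref{thm-Cai} from Theorem~\ref{thm-1} by inverting the linear relation between $\widetilde{D}_{v,\lambda}(a)$ and $D_{v,\lambda}(a)$. The starting point is the identity $x_i^{(a)}=x^{(a)}\cup\{q^{-1}x_i\}$, which via the generating function for complete symmetric functions gives
\[
h_r(x^{(a)})=h_r(x_i^{(a)})-q^{-1}x_i\,h_{r-1}(x_i^{(a)}).
\]
Applying this identity to each factor of $h_\lambda(x^{(a)})=\prod_{i=1}^n h_{\lambda_i}(x^{(a)})$ and expanding the resulting product yields the inversion
\[
\widetilde{D}_{v,\lambda}(a)=\sum_{S\subseteq\{1,\dots,n\}}(-q^{-1})^{|S|}\,D_{v-\mathbf{1}_S,\,\lambda-\mathbf{1}_S}(a),
\]
where $\mathbf{1}_S$ is the characteristic vector of $S$, summands with a negative coordinate in $\lambda-\mathbf{1}_S$ are treated as zero under the convention $h_{-1}=0$, and $D$ is interpreted via~\eqref{GDyson} for arbitrary nonnegative integer sequences rather than only partitions.

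Next I would verify that the vanishing clause of Theorem~\ref{thm-1} extends from partition $\lambda$ to arbitrary nonnegative composition $\lambda$: inspecting Lemma~\ref{lem-rec}, the recursion uses only the hypothesis $\lambda_1\ge\max\{v_i\}$, and the assumption $v\prec\lambda$ supplies exactly that condition inductively as the peeling proceeds, independently of whether $\lambda$ is weakly decreasing. The main combinatorial step is then to show that the dominance hypothesis $v^+\not\ge\lambda$ forces the right-hand side of the inversion formula to vanish. Because dominance is strictly coarser than the reverse lexicographic order $\preceq$ governing Theorem~\ref{thm-1}, individual $D$-summands are not all expected to vanish, and one must arrange explicit cancellations. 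My plan is to locate the first index $k$ at which $\sum_{i\le k}v^+_i<\sum_{i\le k}\lambda_i$ and construct a sign-reversing involution on the subsets $S$ that pairs surviving summands of opposite signs by toggling a carefully chosen coordinate of $S$ near index $k$.

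The hard part will be constructing this involution and verifying that paired summands have equal magnitude, since dominance is a genuinely weaker order than $\preceq$ and the matching must simultaneously respect the sorted vector $v^+$, the unsorted vector $v$, and each shifted composition $\lambda-\mathbf{1}_S$. If this direct combinatorial matching proves intractable, I would fall back on Cai's original parameter-induction strategy from~\cite{cai}: introduce auxiliary parameters into the Dyson product and induct on their number, using a dominance-controlled variant of the partial-fraction splitting of Section~\ref{sec-split} to propagate the vanishing through each inductive step.
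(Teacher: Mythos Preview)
The paper does not prove Theorem~\ref{thm-Cai}; it merely quotes it from Cai~\cite{cai} and then \emph{uses} it in Section~\ref{sec-beyond} to handle examples lying outside the scope of Theorem~\ref{thm-1}. So there is no ``paper's own proof'' to compare against, and any attempt to derive Theorem~\ref{thm-Cai} from Theorem~\ref{thm-1} would be a genuinely new argument.

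Your proposal, however, has real gaps. First, the claim that the vanishing clause of Theorem~\ref{thm-1} extends verbatim to composition second arguments is not justified by the reasoning you give. Lemma~\ref{lem-rec} needs $\lambda_1\ge\max_i v_i$ at \emph{each} inductive step, and for a partition $\lambda$ this is supplied by $\lambda_i\ge\lambda_k$ for $i<k$ together with the $\prec$ data. For a composition this fails: take $n=2$, $\lambda=(1,3)$, $v=(1,2)$. Then $v\prec\lambda$, yet $\lambda_1=1<\max\{v_1,v_2\}=2$, so Lemma~\ref{lem-rec} does not even apply at the first step. Since $\lambda-\mathbf{1}_S$ can fail to be a partition whenever $S$ meets but does not cover a block of equal parts of $\lambda$, this issue actually arises in your inversion formula.

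Second, and more seriously, the heart of the argument---the sign-reversing involution---is not constructed. You correctly observe that $v^{+}\not\ge\lambda$ in dominance does \emph{not} force $v\prec\lambda$ (e.g.\ $v=(4,1,1,1)$, $\lambda=(3,3,1,0)$ has $v^{+}\not\ge\lambda$ but $v_1>\lambda_1$), so individual summands $D_{v-\mathbf{1}_S,\lambda-\mathbf{1}_S}(a)$ need not vanish and genuine cancellation is required. But no pairing is produced, and ``toggle a coordinate near index $k$'' does not obviously preserve the value of the summand, because the nonvanishing $D$-terms are governed by the intricate product in Theorem~\ref{thm-1} and by whatever holds for the non-partition shifts. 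Your stated fallback---revert to Cai's inductive strategy---is circular, since that \emph{is} the proof of the theorem you are quoting. As it stands the proposal is a plan with its load-bearing step missing.
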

By Theorem~\ref{thm-Cai} and the splitting formula \eqref{e-Fsplit} for $F(a,w)$, we can show that $D_{v,\lambda}(a)=0$ for some cases when $v\npreceq \lambda$.
For example, we can prove the next result.
\begin{ex}
For $n\geq 3$
\[
D_{(0,5,2),(4,3)}(a)=0.
\]
\end{ex}
In the above case, $v=(0,5,2)$, $\lambda=(4,3)$ and $v\npreceq \lambda$.
By \eqref{e-relation} we can write $D_{(0,5,2),(4,3)}(a)$ as
\[
D_{(0,5,2),(4,3)}(a)=\CT_{x,w}\frac{w_1^4w_2^3}{x_2^5x_3^2}F(a,w).
\]
Using the splitting formula \eqref{e-Fsplit} for $F(a,w)$ we have
\[
D_{(0,5,2),(4,3)}(a)=\CT_{x,w}\frac{w_1^4w_2^3}{x_2^5x_3^2}
\bigg(\sum_{k=-1}^{a_1-1}\frac{A_k}{1-q^kx_1/w_1}
+\sum_{i\in I}\sum_{j=0}^{a_i-1}\frac{B_{ij}}{1-q^jx_i/w_1}\bigg),
\]
where $I=\{i\mid a_i\neq 0,\ i=2,\dots,n\}$.
In the above, expanding $1/\big(1-q^kx_1/w_1\big)$ and $1/\big(1-q^jx_i/w_1\big)$ as
\[
\sum_{l\geq 0}\big(q^kx_1/w_1\big)^l \quad \text{and} \quad
\sum_{l\geq 0}\big(q^jx_i/w_1\big)^l
\]
respectively, and taking constant term with respect to $w_1$ yields
\begin{equation}\label{ex-1}
D_{(0,5,2),(4,3)}(a)=\CT_{x,w^{(1)}}\bigg(\sum_{k=-1}^{a_1-1}\frac{q^{4k}x_1^4w_2^3A_k}{x_2^5x_3^2}
+\sum_{i\in I}\sum_{j=0}^{a_i-1}\frac{q^{4j}x_i^4w_2^3B_{ij}}{x_2^5x_3^2}\bigg).
\end{equation}
Recall that $A_k$ is a power series in $x_1$, and $B_{ij}$ is a power series in $x_i$ with no constant term. Then
\[
\CT_{x_1}\frac{x_1^4w_2^3A_k}{x_2^5x_3^2}=0 \quad \text{for $-1\leq k\leq a_1-1$,}
\]
and
\[
\CT_{x_i}\frac{x_i^4w_2^3B_{ij}}{x_2^5x_3^2}=0 \quad
\text{for $3\leq i\leq n$ and $i\in I$.}
\]
Hence, if $a_2>0$ then \eqref{ex-1} reduces to
\begin{equation}\label{e-ex1}
D_{(0,5,2),(4,3)}(a)=\CT_{x,w^{(1)}}\sum_{j=0}^{a_2-1}\frac{q^{4j}x_2^4w_2^3B_{2j}}{x_2^5x_3^2}
=\sum_{j=0}^{a_2-1}q^{4j}\CT_{x,w^{(1)}}\frac{w_2^3B_{2j}}{x_2x_3^2},
\end{equation}
and $D_{(0,5,2),(4,3)}(a)=0$ otherwise. 

In the following we show that $D_{(0,5,2),(4,3)}(a)$ also vanishes for $a_2>0$. By the fact that $B_{2j}$ is a power series in $x_2$ with no constant term,
\[
\CT_{x_2}x_2^{-1}B_{2j}=\big(x_2^{-1}B_{2j}\big)\big|_{x_2=0}.
\]
Taking constant term of \eqref{e-ex1} with respect to $x_2$ and using the above equation, we have
\[
D_{(0,5,2),(4,3)}(a)
=\sum_{j=0}^{a_2-1}q^{4j}\CT_{x^{(2)},w^{(1)}}\frac{w_2^3}{x_3^2}\big(x_2^{-1}B_{2j}\big)\big|_{x_2=0}.
\]
Substituting the expression \eqref{B} for $B_{ij}$ with $i=2$, and then carrying out the substitution $x_2=0$ in $x_2^{-1}B_{2j}$ gives
\begin{multline}\label{e-ex3}
D_{(0,5,2),(4,3)}(a)
=C_{a_2}
\CT_{x^{(2)},w^{(1)}}\frac{w_2^3}{x_1x_3^2}
\prod_{\substack{u=1\\u\neq 2}}^n\prod_{v=2}^n
(q^{-\chi(u=v)}x_u/w_v)_{a_u+\chi(u=v)}^{-1}\\
\times\prod_{\substack{1\leq v<u\leq n\\v,u\neq 2}}
(x_v/x_u)_{a_v}(qx_u/x_v)_{a_u},
\end{multline}
where
\[
C_{a_2}=\sum_{j=0}^{a_2-1}\frac{-q^{(a_1+5)j+1+(j+1)\sum_{l=3}^na_l}}{(q^{-j})_j(q)_{a_2-j-1}}.
\]
For $i=3,\dots,n$, the right-hand side of \eqref{e-ex3} is a power series in
$w_i^{-1}$,
we can easily extract out the constant term with respect to $w_i$ by setting $w_i^{-1}=0$.
Then
\begin{equation}\label{e-ex2}
D_{(0,5,2),(4,3)}(a)=C_{a_2}
\CT_{x^{(2)},w_2}\frac{w_2^3}{x_1x_3^2}
\prod_{\substack{u=1\\u\neq 2}}^n(x_u/w_2)_{a_u}^{-1}
\prod_{\substack{1\leq v<u\leq n\\v,u\neq 2}}
(x_v/x_u)_{a_v}(qx_u/x_v)_{a_u}.
\end{equation}
By the generating function \eqref{e-gfcomplete} of complete symmetric functions,
\begin{equation}\label{hX}
\CT_{w_2} w_2^3 \prod_{\substack{u=1\\u\neq 2}}^n(x_u/w_2)_{a_u}^{-1}
=h_3(X),
\end{equation}
where $X=(x_1,qx_1,\dots,q^{a_1-1}x_1,x_3,qx_3,\dots,q^{a_3-1}x_3,\dots,
x_n,qx_n,\dots,q^{a_n-1}x_n)$ is an alphabet of cardinality $|a|-a_2$.
Substituting \eqref{hX} into \eqref{e-ex2} yields
\[
D_{(0,5,2),(4,3)}(a)=C_{a_2}\CT_{x^{(2)}}\frac{h_3(X)}{x_1x_3^2}
\prod_{\substack{1\leq v<u\leq n\\v,u\neq 2}}
(x_v/x_u)_{a_v}(qx_u/x_v)_{a_u}.
\]
By the definition of $\widetilde{D}_{v,\lambda}(a)$ in \eqref{GDyson1},
the constant term in the above equation is $\widetilde{D}_{(1,2),(3)}(a^{(2)})$.
Thus
\[
D_{(0,5,2),(4,3)}(a)=C_{a_2}\widetilde{D}_{(1,2),(3)}(a^{(2)}).
\]
Since $(3)>(1,2)^+=(2,1)$, by Theorem~\ref{thm-Cai}
the constant term $\widetilde{D}_{(1,2),(3)}(a^{(2)})=0$.
Hence $D_{(0,5,2),(4,3)}(a)=0$ for $a_2>0$.

In conclusion, $D_{(0,5,2),(4,3)}(a)=0$. 

Similar to the proof of $D_{(0,5,2),(4,3)}(a)=0$, we can show that $D_{(5,0,2),(4,3)}(a)=0$.
On the other hand, using Maple we can verify that
$D_{(5,2,0),(4,3)}(a)$, $D_{(2,0,5),(4,3)}(a)$, $D_{(2,5,0),(4,3)}(a)$
and $D_{(0,2,5),(4,3)}(a)$ do not vanish.
In general, for $v\npreceq \lambda$ we can not get a closed-form formula for
$D_{v,\lambda}(a)$.


\begin{thebibliography}{10}

\bibitem{andrews1975}
G. E. Andrews, \emph{Problems and prospects for basic
hypergeometric functions}, in \emph{Theory and Application of
Special Functions}, Academic Press, New York, 1975;
pp. 191--224.

\bibitem{andrew-qbinomial}
G. E. Andrews, \emph{The Theory of Partitions}, Addison--Wesley, Reading,
Mass.,1976, Encyclopedia of Mathematics and its Applications, Vol.
2.

\bibitem{cai}
T. W. Cai, \emph{Macdonald symmetric functions of rectangular shapes}, J. Combin. Theory Ser. A 128 (2014), 162--179.

\bibitem{dyson}
F. J. Dyson, \emph{Statistical theory of the energy levels of complex systems I},
J. math. Phys. 3 (1962), 140--156.

\bibitem{GR}
G. Gasper and M. Rahman, \emph{Basic Hypergeometric Series}, Encyclopedia of Mathematics and its Applications, Vol. 35, second edition, Cambridge University Press, Cambridge, 2004.

\bibitem{gess-xin2006}
I. M. Gessel and G. Xin, \emph{A short proof of the
Zeilberger--Bressoud $q$-Dyson theorem}, Proc. Amer. Math. Soc.
134 (2006), 2179--2187.

\bibitem{kadell}
K. W. J. Kadell, \emph{A Dyson constant term orthogonality relation}, J. Combin. Theory Ser. A 89 (2000), 291--297.

\bibitem{KLW}
G. K\'{a}rolyi, A. Lascoux and S. O. Warnaar, \emph{Constant term identities and Poincar\'{e} polynomials},
Trans. Amer. Math. Soc. 367 (2015), 6809--6836.

\bibitem{KN}
G. K\'{a}rolyi and Z. L. Nagy, \emph{A simple proof of the Zeilberger--Bressoud q-Dyson theorem}, Proc. Amer. Math. Soc.,
142 (2014), 3007--3011.

\bibitem{Mac95}
I. G. Macdonald, \emph{Symmetric Functions and Hall Polynomials},
2nd edition, Clarendon Press, Oxford, 1995.

\bibitem{zeil-bres1985}
D. Zeilberger and D. M. Bressoud, \emph{A proof of Andrews'
$q$-Dyson conjecture}, Discrete Math. 54 (1985),
201--224.

\bibitem{Zhou}
Y. Zhou, \emph{On the $q$-Dyson orthogonality problem}, arXiv:1911.12479.

\bibitem{Zhou2}
Y. Zhou, \emph{A recursion for a symmetric function generalization of the $q$-Dyson constant term identity}, arXiv:2002.11229.

\end{thebibliography}
\end{document}